\begin{document}

\title{Phutball draws}
\author{Sucharit Sarkar}
\address{Department of Mathematics\\University of California\\Los Angeles, CA 90095}
\email{sucharit@math.ucla.edu}

\begin{abstract}In this short note, we exhibit a draw in the game of
  Philosopher's Phutball. We construct a position on a $12\times 10$
  Phutball board from where either player has a drawing strategy, and
  then generalize it to an $m\times n$ board with $m-2\geq n\geq 10$.
\end{abstract}

\maketitle

\newtheorem{theorem}{Theorem}
\newtheorem{nontheorem}[theorem]{Non-Theorem}
\newtheorem{corollary}[theorem]{Corollary}
\newtheorem{question}[theorem]{Question}

\renewcommand{\th}{^{\text{th}}}

\newcommand{\Dsw}{\swarrow}
\newcommand{\Ds}{\downarrow}
\newcommand{\Dse}{\searrow}
\newcommand{\Dw}{\leftarrow}
\newcommand{\De}{\to}
\newcommand{\Dnw}{\nwarrow}
\newcommand{\Dn}{\uparrow}
\newcommand{\Dne}{\nearrow}

\tikzstyle{chap}=[circle,fill=black,minimum height=17pt,inner sep=0pt,
outer sep=0pt,style={transform shape=true}]
\tikzstyle{ball}=[circle,fill=black!20!white,draw=black,thick,minimum
height=17pt,inner sep=0pt, outer sep=0pt,style={transform shape=true}]
\tikzstyle{move}=[rectangle,draw=black,inner sep=1pt, outer sep=0pt,anchor=west]

\nocite{*}

Philosophers' Phutball, invented in the towers of Cambridge by Conway
and company, and named after a much-beloved Monty Python sketch of a
similar name, is a two-player game played on a $m\times n$ board. The
official Phutball pitch, as described by
Berlekamp-Conway-Guy~\cite{BCG}, is $19\times 15$ (that is, there are
$19$ rows and $15$ columns); however, Phutball is usually played on a
$19\times 19$ Go board.

The rules of the game are fairly simple. One of the grid points is
occupied by \emph{the ball}, usually a black Go stone. Alfred, the
first player, wins if the ball crosses the topmost row or ends up in
the topmost row at the end of a turn. Betty, going second, wins if the
ball crosses the bottommost row or ends up in the bottommost row at
the end of a turn. On their turn, the players may either place \emph{a
  chap}, usually a white Go stone, or move the ball. The ball is moved
by jumping over a line of chaps in one of the eight possible
directions, and those chaps are immediately removed; and multiple
jumps are allowed, although not required. However, in all our
diagrams, we will represent the ball by a grey stone and the chaps by
black stones (Phutball played with reversed colors is called floodlit
Phutball).

Despite the simplicity of the rules, the game is fairly
complicated. Phutball on an $n\times n$ board is
PSPACE-hard~\cite{phutball-pspace} and to even check if one has a win
in one is NP-complete~\cite{phutball-np}. Several variants of Phutball
have been analyzed in great detail, such as ``directional
Phutball''~\cite{phutball-directional} when the players are
constrained to jump in certain directions, or ``one-dimensional
Phutball''~\cite{phutball-1d} played on an $m\times 1$ board. Even the
one-dimensional game is surprisingly complicated: It has only been
analyzed fully in~\cite{phutball-1d} when the players are forbidden to
place `off-parity' chaps. In the general one-dimensional game,
sometimes the only winning move for a player is to jump all the way
back! This makes the game `loopy' and hard to analyze. It is not even
known if there are one-dimensional configurations from where both
players have a drawing strategy~\cite{phutball-loopy}.  Even less is
known regarding the usual two-dimensional Phutball. Starting from the
middle on a one-dimensional empty $(2m+1)\times 1$ board, the first
player Alfred can at least ensure a draw by the usual
strategy-stealing argument, similar to the proof
of~\cite[Observation~3]{phutball-directional}, but even the
strategy-stealing argument fails in the two-dimensional case. After
filling up his top row with chaps, Alfred runs out of `passes'. So it
is still unknown if starting from the middle on a two-dimensional
empty $(2m+1)\times n$ board the first player Alfred has a drawing
strategy. 

In this short note, we will study draws in the game of Phutball. The
main result, Theorem~\ref{thm:main}, is that there is a position on a
$12\times 10$ board which leads to a draw under optimal play by both
the players. The configuration immediately generalizes to an
$m\times n$ board with $m-2\geq n\geq 10$, see
Corollary~\ref{cor:main}.

We will use chessboard notation to describe the board. The columns are
labeled A, B, C, \dots, left to right, the rows are numbered 1, 2, 3,
\dots, bottom to top, and the grid points are labeled a1, a2, \dots,
b1, b2, \dots, accordingly. The $k\th$ move by Alfred will be referred
to as $\alpha$($k$), and the $k\th$ move by Betty as $\beta$($k$),
with the $k$ in Roman numerals. A chap placement will simply be
referred to by the name of the grid point where the chap was placed,
while a jump will be described by a (non-empty) sequence of arrows,
where the arrows describe the directions of the jump (read left to
right).

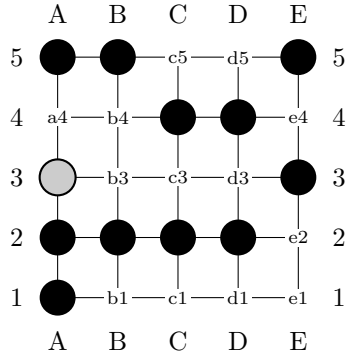
\begin{figure}[ht]
\centering
\begin{tikzpicture}[scale=0.8]

\draw (0,1) grid (4,5);

\foreach \l [count=\i from 0] in {A,B,C,D,E}
{
  \node[below=3ex,inner sep=0pt,outer sep=0pt] at (\i,1) {\l};
  \node[above=3ex,inner sep=0pt,outer sep=0pt] at (\i,5) {\l};
}
\foreach \j in {1,...,5}
{
  \node[left=3ex,inner sep=0pt,outer sep=0pt] at (0,\j) {\j};
  \node[right=3ex,inner sep=0pt,outer sep=0pt] at (4,\j) {\j};
}
\foreach \l [count=\i from 0] in {a,b,c,d,e}
{
  \foreach \j in {1,...,5}
  {
    \node[fill=white,fill opacity=1, text opacity=1,inner sep=1pt,
    outer sep=0pt] (\l\j) at (\i,\j) {\scriptsize \l\j};
  }
}

\node[ball] at (a3) {};

\node[chap] at (b2) {};
\node[chap] at (c2) {};
\node[chap] at (d2) {};
\node[chap] at (e3) {};
\node[chap] at (d4) {};
\node[chap] at (e5) {};
\node[chap] at (a2) {};
\node[chap] at (a1) {};
\node[chap] at (c4) {};
\node[chap] at (a5) {};
\node[chap] at (b5) {};
\end{tikzpicture}
\caption{An illustrative Phutball position.}\label{fig:illustrative}
\end{figure}
Allow us to illustrate the rules, the notations, and a few subtleties
in the following example. Assume we have the board position from
Figure~\ref{fig:illustrative} on a $5\times 5$ board.  At this point,
these are the legal moves: a chap placement on any of the $13$ empty
points, or any of the following jumps, $\Ds$, $\Dse$, $\Dse\Dn$,
$\Dse\Dn\Dse$, $\Dse\Dn\Dn$, $\Dse\Dn\Dne$. The jumps $\Dse\Dn\Dn$ and
$\Dse\Dn\Dne$ win the game for Alfred, while the jumps $\Ds$, $\Dse$,
and $\Dse\Dn\Dse$ win it for Betty. Note that during the jumps
$\Dse\Dn$, $\Dse\Dn\Dn$, and $\Dse\Dn\Dne$, the ball uses the bottom
row, but since it does not end up in the bottom row at the end of the
turn, it is not a win for Betty. Also note that the jumps $\Dse\Dne$,
$\Dse\Dn\Dn\Dw$, and $\Dse\Dn\Dn\Dse$ are not allowed since they hit
the sidelines, but the jump $\Dse\Dn\Dne$ is allowed; the corners are
considered part of the goallines, not the sidelines, and jumping
diagonally through the corners is allowed; jumping through the top
corners is a win for Alfred while jumping through the bottom corners
is a win for Betty. The jump $\Dse\Dn\Dsw$ is also not allowed, since
the chap at b2 is removed immediately after the first jump and cannot
be reused during the third jump.

Let us now discuss an elementary Phutball strategy before we
proceed. If a player has a winning jump, then it is called \emph{a
  shot}. There are only two ways to defend against a shot. One may
perform a jump of their own, jumping away from the danger; we will
call such a jump \emph{a jot}, short for \emph{j}umping \emph{o}ut of
\emph{t}rouble. Alternatively, one may place a chap along the route of
the winning jump, so that the winning jump no longer exists; such a
chap placement is called \emph{a tackle}. If after every jump
(respectively, a chap placement), it is still a shot, perhaps by some
other route, then the original shot is called an \emph{unjottable}
(respectively, \emph{untackleable}) shot. We will annotate shots,
unjottable shots, and untackleable shots by $!$, $\ast!$, and $!!$,
respectively. If a player has a shot that is both unjottable and
untackleable, then he or she has a win in one, and we will indicate
this by $\#$. Consider the situation from Figure~\ref{fig:shot-tackle}
on the $5\times 5$ board.
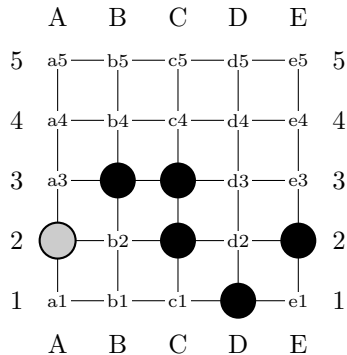
\begin{figure}[ht]
\centering
\begin{tikzpicture}[scale=0.8]

\draw (0,1) grid (4,5);

\foreach \l [count=\i from 0] in {A,B,C,D,E}
{
  \node[below=3ex,inner sep=0pt,outer sep=0pt] at (\i,1) {\l};
  \node[above=3ex,inner sep=0pt,outer sep=0pt] at (\i,5) {\l};
}
\foreach \j in {1,...,5}
{
  \node[left=3ex,inner sep=0pt,outer sep=0pt] at (0,\j) {\j};
  \node[right=3ex,inner sep=0pt,outer sep=0pt] at (4,\j) {\j};
}
\foreach \l [count=\i from 0] in {a,b,c,d,e}
{
  \foreach \j in {1,...,5}
  {
    \node[fill=white,fill opacity=1, text opacity=1,inner sep=1pt,
    outer sep=0pt] (\l\j) at (\i,\j) {\scriptsize \l\j};
  }
}

\node[ball] at (a2) {};

\node[chap] at (b3) {};
\node[chap] at (c3) {};
\node[chap] at (c2) {};
\node[chap] at (d1) {};
\node[chap] at (e2) {};
\end{tikzpicture}
\caption{Shots, tackles, and jots.}\label{fig:shot-tackle}
\end{figure}
It is Alfred's turn, and he has to defend against Betty's shot
$\Dne\Ds$. He can jot $\Dne\Ds\De\Dn$, or he can tackle by placing a
chap at c4. In this case, the tackle wins while the jot loses. By
tackling at c4, Alfred gets the untackleable unjottable shot $\Dne$. On
the other hand, if Alfred had jotted off to e3, then Betty can simply
place a chap on e2 to get a untackleable unjottable shot of her own.

Now we are ready for our main result.

\begin{theorem}\label{thm:main}
  There are drawn configurations in Phutball played on a $12\times 10$
  board.
\end{theorem}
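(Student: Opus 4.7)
The plan is to exhibit an explicit position on the $12 \times 10$ board together with a finite family $\mathcal{F}$ of positions (each tagged with whose turn it is) containing it, such that from every position in $\mathcal{F}$ the player-to-move has no winning move but does have a move whose result lies again in $\mathcal{F}$. Any such position is then drawn: perpetual play inside $\mathcal{F}$ denies either side the win. The configuration I expect to describe places the ball near the center of the board with a roughly symmetric arrangement of chaps forming a ``fortress'' that blocks direct progress towards either goalline and simultaneously supports latent threats in both directions, so that both players have motivation to remain defensive.

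To verify the closure property of $\mathcal{F}$, I would case-split on the move made by the player-to-move. Chap placements far from the ball and the fortress are essentially inert, so the defender can respond with a canonical neutralizing move (for example, mirroring the chap, or placing their own chap at a designated safe square) and land back in $\mathcal{F}$. Chap placements near the fortress may create a new shot for the mover; these the defender tackles or jots, and I would verify on a case-by-case basis that the resulting position is again in $\mathcal{F}$. The truly delicate case is jumps: the player-to-move may attempt to burst out of the fortress, possibly via a long multi-jump using diagonals or even threading through a corner. For each such jump I would exhibit either (i) a tackling chap placement that restores a position in $\mathcal{F}$ with the turn flipped, or (ii) an immediate winning counter-jump, showing that the original jump left the ball fatally exposed—so it was never a threat to $\mathcal{F}$ in the first place.

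The main obstacle is the combinatorial size of the jump enumeration. Phutball jumps can branch, double back, and weave through the chaps, and every chap placement by either side may open up candidate jumps that were not present before. My fortress therefore has to be engineered so that the list of plausible jumping routes for each side is small and controllable. This is where the hypothesis $m - 2 \geq n$ does real work: it guarantees several empty buffer rows above and below the fortress, so no single jump can reach either goalline without first passing through the fortress and hence being interceptable by a tackle. The corresponding analysis for the $12 \times 10$ case generalizes verbatim to $m \times n$ with $m - 2 \geq n \geq 10$, which will yield the corollary.

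Once the closure of $\mathcal{F}$ is verified through this case analysis for \emph{both} players' turns (the two analyses are largely symmetric but not identical, since Alfred moves first), the theorem is immediate: the designated starting position lies in $\mathcal{F}$, and every position in $\mathcal{F}$ is drawn.
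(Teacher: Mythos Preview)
Your proposal outlines a plausible-sounding strategy but does not actually carry it out, and the strategy it sketches is not the one that works here. Two concrete gaps:

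\textbf{(1) ``Inert'' chap placements do not exist in Phutball.} Your plan hinges on treating chap placements ``far from the ball and the fortress'' as harmless, to be neutralized by a canonical response. But every chap on the board is potential jumping fuel, and a patient opponent can spend many turns seeding the board before cashing in with a long multi-jump. A finite family $\mathcal{F}$ cannot absorb an unbounded number of free chap placements, and a mirroring response does not help: your mirror chap becomes fuel too. The paper's construction avoids this entirely by arranging that \emph{every} move in the cycle is forced by an immediate threat (an untackleable or unjottable shot). Neither player ever has a free tempo to place a chap wherever they like; the only non-losing move at each step is a specific tackle or a specific jump. That forcing, not a passive fortress, is what keeps $\mathcal{F}$ finite.

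\textbf{(2) The geometry you describe is backwards.} You place the ball ``near the center'' with ``buffer rows above and below,'' and you read the hypothesis $m-2\geq n$ as providing those buffers. In the actual construction the ball sits at a2, one square from a corner, and the drawing mechanism is a cascade of forced tackles marching along a diagonal from one \emph{sideline} to the other; the sideline is what finally forces Alfred to jump, after which the position is symmetric to the start with roles reversed. The inequality $m-2\geq n$ is needed so that this sideline-to-sideline diagonal (which must span all $n$ columns) fits strictly between the two goal-lines, leaving one row below and one row above so the game is not already over. It has nothing to do with insulating a central fortress.

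So the missing idea is: build a position with a latent one-move threat against the side to move, such that the unique defence creates a symmetric threat against the other side, and iterate this along a diagonal until the sideline forces a jump that restores (a reflection of) the original position.
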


\begin{proof}
  Consider the configuration from Figure~\ref{fig:main-draw}, with
  Alfred to play. (Notice that the configuration of the chaps has a
  symmetry, which will of relevance very soon.)
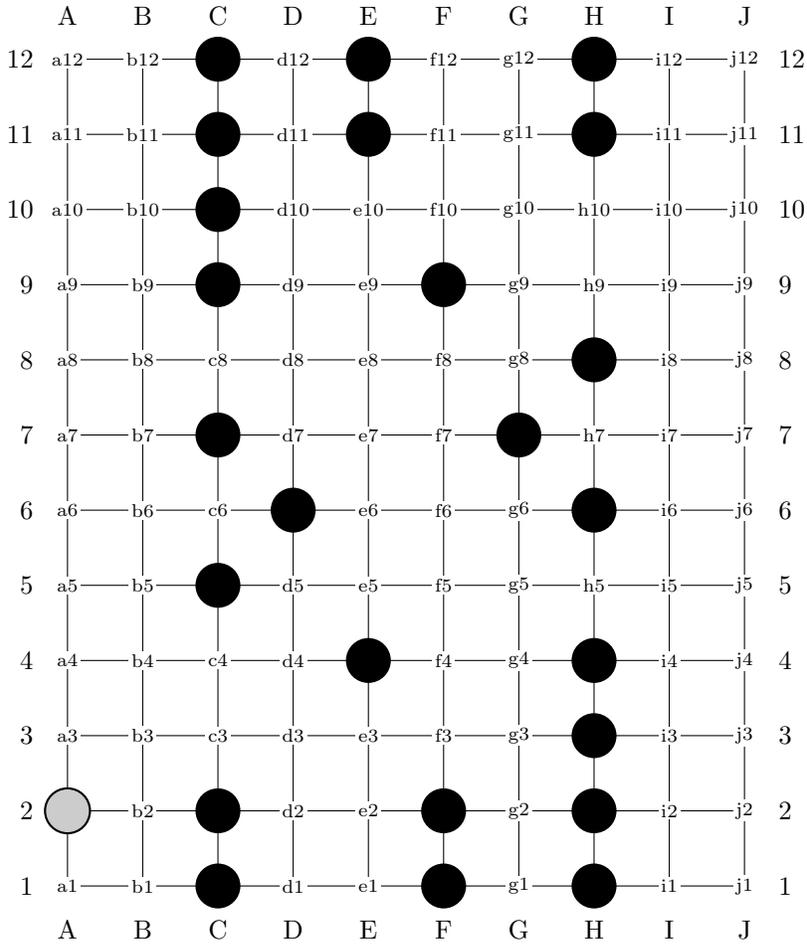
\begin{figure}[ht]
\centering
\begin{tikzpicture}{scale=0.8}

\draw (0,1) grid (9,12);

\foreach \l [count=\i from 0] in {A,B,C,D,E,F,G,H,I,J}
{
  \node[below=3ex,inner sep=0pt,outer sep=0pt] at (\i,1) {\l};
  \node[above=3ex,inner sep=0pt,outer sep=0pt] at (\i,12) {\l};
}
\foreach \j in {1,...,12}
{
  \node[left=3ex,inner sep=0pt,outer sep=0pt] at (0,\j) {\j};
  \node[right=3ex,inner sep=0pt,outer sep=0pt] at (9,\j) {\j};
}
\foreach \l [count=\i from 0] in {a,b,c,d,e,f,g,h,i,j}
{
  \foreach \j in {1,...,12}
  {
    \node[fill=white,fill opacity=1, text opacity=1,inner sep=1pt,
    outer sep=0pt] (\l\j) at (\i,\j) {\scriptsize \l\j};
  }
}

\node[ball] at (a2) {};

\node[chap] at (c1) {};
\node[chap] at (c2) {};
\node[chap] at (c5) {};
\node[chap] at (c7) {};
\node[chap] at (c9) {};
\node[chap] at (c10) {};
\node[chap] at (c11) {};
\node[chap] at (c12) {};

\node[chap] at (d6) {};
\node[chap] at (e4) {};
\node[chap] at (e11) {};
\node[chap] at (e12) {};

\node[chap] at (f1) {};
\node[chap] at (f2) {};
\node[chap] at (f9) {};
\node[chap] at (g7) {};

\node[chap] at (h1) {};
\node[chap] at (h2) {};
\node[chap] at (h3) {};
\node[chap] at (h4) {};
\node[chap] at (h6) {};
\node[chap] at (h8) {};
\node[chap] at (h11) {};
\node[chap] at (h12) {};

\end{tikzpicture}
\caption{A drawn position in Phutball.}\label{fig:main-draw}
\end{figure}

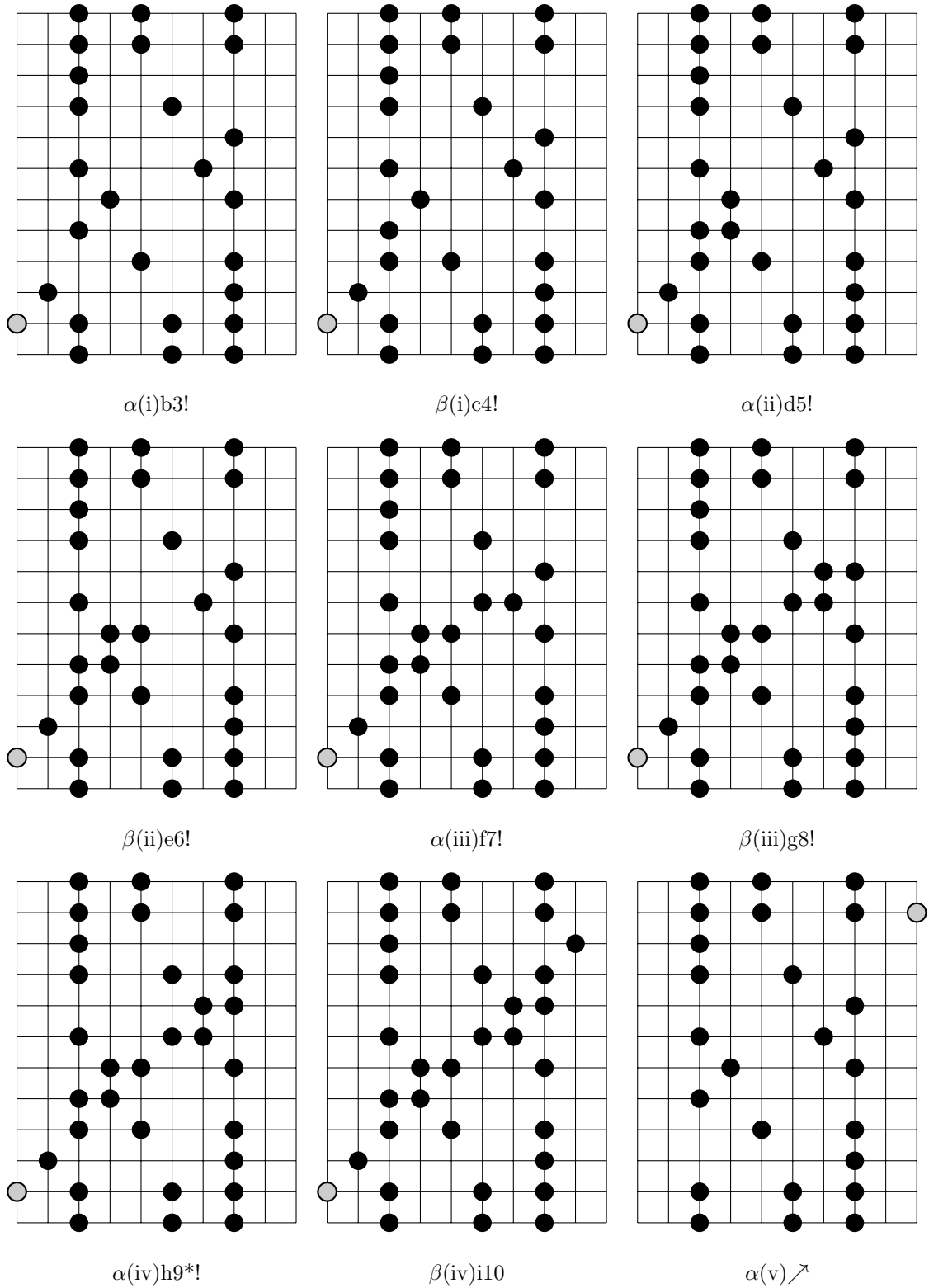
\begin{figure}[ht]
  \centering
  \begin{tikzpicture}[scale=0.5]
    \foreach \b [count=\t from 0] in {$\alpha$(i)b3!,$\beta$(i)c4!,$\alpha$(ii)d5!,$\beta$(ii)e6!,$\alpha$(iii)f7!,$\beta$(iii)g8!,$\alpha$(iv)h9*!,$\beta$(iv)i10,$\alpha$(v)$\Dne$}
    {
      \tikzset{xshift={mod(\t,3)*10cm}, yshift=-floor(\t/3)*14cm}
        \draw (0,1) grid (9,12);
        \node[below=1ex,inner sep=0pt, outer sep=0pt] at (4.5,0) {\b};
        
        \foreach \l [count=\i from 0] in {a,b,c,d,e,f,g,h,i,j}
        {
          \foreach \j in {1,...,12}
          {
            \node[inner sep=0pt,outer sep=0pt] (\t-\l\j) at (\i,\j) {};
          }
        }
        
        \node[chap] at (\t-c1) {};
        \node[chap] at (\t-c2) {};
        \node[chap] at (\t-c5) {};
        \node[chap] at (\t-c7) {};
        \node[chap] at (\t-c9) {};
        \node[chap] at (\t-c10) {};
        \node[chap] at (\t-c11) {};
        \node[chap] at (\t-c12) {};
        
        \node[chap] at (\t-d6) {};
        \node[chap] at (\t-e4) {};
        \node[chap] at (\t-e11) {};
        \node[chap] at (\t-e12) {};
        
        \node[chap] at (\t-f1) {};
        \node[chap] at (\t-f2) {};
        \node[chap] at (\t-f9) {};
        \node[chap] at (\t-g7) {};
        
        \node[chap] at (\t-h1) {};
        \node[chap] at (\t-h2) {};
        \node[chap] at (\t-h3) {};
        \node[chap] at (\t-h4) {};
        \node[chap] at (\t-h6) {};
        \node[chap] at (\t-h8) {};
        \node[chap] at (\t-h11) {};
        \node[chap] at (\t-h12) {};
    }

    \foreach \t in {0,...,7}
    {
      \node[ball] at (\t-a2) {};
      \foreach \i in {0,...,\t}
      {
        \tikzset{xshift={mod(\t,3)*10cm}, yshift=-floor(\t/3)*14cm}
        \node[chap] at (1+\i,3+\i) {};
      }
    }
    \node[ball] at (8-j11) {};

  \end{tikzpicture}
  \caption{Half of the forced sequence of moves in the
    draw.}\label{fig:draw-sequence}
\end{figure}

For her next move, Betty is threatening to place a chap at a1 to get
an untackleable shot. Alfred must jot away from that threat, so he
needs to place a chap next to the ball to create a jump of his
own. Placing a chap at a1, b1, or b2 is a shot for Betty, while
placing a chap at a3 is not much better, since the sequence
\(
\text{$\alpha$(i)a3 $\beta$(i)a1!! $\alpha$(ii)$\Dn$ $\beta$(ii)a3\#}
\)
loses for Alfred. So he must place a chap at b3.

However, this creates a shot for Alfred, so Betty has to defend
against it. Jotting off $\Dne\Dn\De$ to e6 is not helpful since Alfred
can place a chap back at d6 to get an untackleable unjottable shot. So
Betty has to tackle by placing a chap at c4.

The tables turn again. Now Betty has a shot and Alfred has to defend
against it. His only available tackle is at d5, and as the following
game tree shows, if he does not tackle, he loses.
\[
\begin{tikzpicture}[xscale=3.35,yscale=0.8]
\coordinate (S) at (0,1);

\coordinate (b5) at (1,2);
\coordinate (b7) at (1,1);
\coordinate (d7) at (1,0);

\coordinate (b5b7) at (2,3);
\coordinate (b5d7) at (2,2);
\coordinate (b5d7b7) at (3,3);
\coordinate (b5d7T) at (3,2);
\coordinate (b5d7Tb7) at (4,3);
\coordinate (b5d7Tf5) at (4,2);

\node[move] (nS) at (S) {$\alpha$(i)b3! $\beta$(i)c4!};

\node[move] (nb5) at (b5) {$\alpha$(ii)$\Dne\Dw$ $\beta$(ii)c5!!};
\node[move] (nb7) at (b7) {$\alpha$(ii)$\Dne\Dn\Dw$ $\beta$(ii)c6!! $\alpha$(iii)$\Dse\Dw$ $\beta$(iii)c5\#};
\node[move] (nd7) at (d7) {$\alpha$(ii)$\Dne\Dn$ $\beta$(ii)d6!};

\node[move] (nb5b7) at (b5b7) {$\alpha$(iii)$\De\Dn\Dw$ $\beta$(iii)c6\#};
\node[move] (nb5d7) at (b5d7) {$\alpha$(iii)$\De\Dn$ $\beta$(iii)d6!};
\node[move] (nb5d7b7) at (b5d7b7) {$\alpha$(iv)$\Dw$ $\beta$(iv)c6\#};
\node[move] (nb5d7T) at (b5d7T) {$\alpha$(iv)d5 $\beta$(iv)d4!!};
\node[move] (nb5d7Tb7) at (b5d7Tb7) {$\alpha$(v)$\Dw$ $\beta$(v)c6\#};
\node[move] (nb5d7Tf5) at (b5d7Tf5) {$\alpha$(v)$\Ds\Dne$ $\beta$(v)f4\#};

\coordinate (d7b7) at (2,-1);
\coordinate (d7b5) at (2,-2);
\coordinate (d7T) at (2,0);
\coordinate (d7Tb7) at (3,1);
\coordinate (d7Tf5) at (3,0);
\coordinate (d7b7d7) at (3,-1);
\coordinate (d7b7b5) at (3,-2);
\coordinate (d7b7d7T) at (4,0);
\coordinate (d7b7d7b5) at (4,-1);

\node[move] (nd7b7) at (d7b7) {$\alpha$(iii)$\Dw$ $\beta$(iii)c6!!};
\node[move] (nd7b5) at (d7b5) {$\alpha$(iii)$\Ds\Dw$ $\beta$(iii)c5\#};
\node[move] (nd7T) at (d7T) {$\alpha$(iii)d5! $\beta$(iii)d4!!};
\node[move] (nd7Tb7) at (d7Tb7) {$\alpha$(iv)$\Dw$ $\beta$(iv)c6\#};
\node[move] (nd7Tf5) at (d7Tf5) {$\alpha$(iv)$\Ds\Dne$ $\beta$(iv)f4\#};
\node[move] (nd7b7d7) at (d7b7d7) {$\alpha$(iv)$\Dse\Dn$ $\beta$(iv)d6!};
\node[move,align=left] (nd7b7b5) at (d7b7b5) {$\alpha$(iv)$\Dse\Dw$ $\beta$(iv)c5!! $\alpha$(v)$\De\Dn$ $\beta$(v)d6*!\\$\alpha$(vi)d5 $\beta$(vi)d4!! $\alpha$(vii)$\Ds\Dne$ $\beta$(vii)f4\#};
\node[move,align=left] (nd7b7d7T) at (d7b7d7T) {$\alpha$(v)d5 $\beta$(v)d4!!\\$\alpha$(vi)$\Ds\Dne$ $\beta$(vi)f4\#};
\node[move] (nd7b7d7b5) at (d7b7d7b5) {$\alpha$(v)$\Ds\Dw$ $\beta$(v)c5\#};

\draw[->] (nS.east) -- (nb5.west);
\draw[->] (nS.east) -- (nb7.west);
\draw[->] (nS.east) -- (nd7.west);

\draw[->] (nb5.east) -- (nb5b7.west);
\draw[->] (nb5.east) -- (nb5d7.west);
\draw[->] (nb5d7.east) -- (nb5d7b7.west);
\draw[->] (nb5d7.east) -- (nb5d7T.west);
\draw[->] (nb5d7T.east) -- (nb5d7Tb7.west);
\draw[->] (nb5d7T.east) -- (nb5d7Tf5.west);

\draw[->] (nd7.east) -- (nd7b7.west);
\draw[->] (nd7.east) -- (nd7b5.west);
\draw[->] (nd7.east) -- (nd7T.west);
\draw[->] (nd7T.east) -- (nd7Tb7.west);
\draw[->] (nd7T.east) -- (nd7Tf5.west);
\draw[->] (nd7b7.east) -- (nd7b7d7.west);
\draw[->] (nd7b7.east) -- (nd7b7b5.west);
\draw[->] (nd7b7d7.east) -- (nd7b7d7T.west);
\draw[->] (nd7b7d7.east) -- (nd7b7d7b5.west);
\end{tikzpicture}
\]
That is, whenever Alfred jumps to b5, b7, d7, or f5, Betty places a
chap at c5, c6, d6, or f4, respectively; and if the ball is at d7 and
there is a chap at d6 and Alfred tackles at d5, then Betty responds by
re-tackling at d4.

Therefore, Alfred is forced to tackle at d5, but once he does, he has
a shot of his own. Betty can jot off $\Dne\Dw\Ds$ to c4, but Alfred
can simply respond by placing a chap at c5 to get an untackleable
unjottable shot. So Betty is also forced to tackle at e6.

And the tables keep on turning. Now Betty gets a shot and jotting off
to h9 does not help, so Alfred tackles at f7. Then he gets a shot, and
by the previous analysis and the symmetry of the board, we know that
none of Betty's three possible jots to g6, i6, or i8 help. Therefore,
Betty tackles at g8 and gets another shot. Alfred's jot to f7 is no
good, so he tackles at h9, gaining an unjottable shot. Betty places a
chap at i10 which is her only defence.

Alfred now has to be extremely careful about his next move. Betty is
threatening a win in two by first placing a chap at j11, which blocks
all jumps along the $\Dne$ direction, and then placing a chap at a1
for a shot. The only way Alfred can prevent Betty from placing a chap
at j11 is by jumping there himself. If he does not do the jump, then
he has two moves to create a new jump for himself against Betty's
threat.

In one of those two moves, Alfred has to place a chap next to the
ball. Placing a chap at a1 and b1 are always shots for Betty, so he
has to place it at b2 or a3. Placing a chap at b2 is also a shot for
Betty with the jump $\De\Dsw$; if Alfred tries to prevent that by
first placing a chap at d2, and then at b2, it still remains a shot
for Betty via the jump $\De\Dse$.

Placing a chap at a3 is a shot for Betty as well using the jump
$\Dn\Dse$; Alfred can try to defend against that by first placing a
chap at a4, and then at a3. So this is Alfred's only possible defence
against Betty's threat, if he chooses not to jump to j11
immediately.

However, the moment Alfred places a chap at a4, Betty places a chap at
a1, forcing Alfred to jump $\Dne$ to j11. Then we get a board position
which is almost symmetric to the original position, except we have two
additional chaps at a1 and a4. By the previous analysis, Betty and
Alfred are forced to place chaps, one at a time, from i10 to b3. The
extra chap at a4 does not feature in any of relevant sequences, but
the extra chap at a1 comes back to haunt Alfred at the very end, for
when he tries to place a chap at b3, the extra chap at a1 makes it a
shot for Betty. That is, we get the following sequence
\[
\text{$\alpha$(v)a4 $\beta$(v)a1!! $\alpha$(vi)$\Dne$ $\beta$(vi)i10! $\alpha$(vii)h9! $\beta$(vii)g8! $\alpha$(viii)f7! $\beta$(viii)e6! $\alpha$(ix)d5! $\beta$(ix)c4\#}
\]
which loses for Alfred.

Therefore, Alfred is forced to play $\alpha$(v)$\Dne$. It is now
Betty's move, and the position is symmetric to the starting
position. Consequently, this is a draw. (The optimal sequence of moves
is shown in Figure~\ref{fig:draw-sequence}.)
\end{proof}

\begin{corollary}\label{cor:main}
  There are drawn configurations in Phutball played on an $m\times n$
  board with $m-2\geq n\geq 10$.
\end{corollary}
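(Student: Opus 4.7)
My plan is to generalize the drawn configuration of Theorem~\ref{thm:main} to the $m \times n$ setting. The key structural features of the $12 \times 10$ configuration are (i) a 180-degree rotational symmetry of the chap placement about the board's center, (ii) a ball position whose 180-image coincides with the endpoint of a forced diagonal jump, and (iii) tactical chaps that force the move sequence along this diagonal. I would aim to preserve all three features on the larger board.

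Concretely, on the $m \times n$ board I would place the ball at $(1, 2)$ and set up chaps forming a 180-symmetric configuration analogous to Figure~\ref{fig:main-draw}. The column walls at columns 3 and $n - 2$ (the analogs of C and H) would be extended vertically to the new board height, keeping the analogous gap pattern. The tactical chaps near the ball (the analog of \emph{d6}) and their 180-images near the symmetric position would be placed as in the theorem. When $m = n + 2$ the forced jump is simply $\Dne$ of length $n - 1$, landing at the 180-image as in Theorem~\ref{thm:main}; when $m > n + 2$, I would add chaps in column $n$ between rows $n+1$ and $m - 2$, turning the forced jump into a compound $\Dne$-then-$\Dn$ jump that lands at the symmetric image at $(n, m-1)$.

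The proof then replicates the game-tree analysis of Theorem~\ref{thm:main}. By the analog of Betty's corner-threat, Alfred is forced to place a chap at the first diagonal square; Betty is forced to tackle at the next square; and so on along the extended diagonal, with each branch ruled out by the analog of the tree in the theorem's proof. After the forced (possibly compound) jump, the ball lands at its 180-image, and by the symmetry of the chap configuration Betty now faces the same position with roles reversed. The same analysis then applies with players swapped, so the configuration recurs and the game is a draw.

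The main obstacle I expect is verifying that the larger board does not offer either player new tactical options. The extra columns (when $n > 10$) could in principle admit jumps into previously nonexistent territory to the right of column H, while the extra rows (when $m > n + 2$) could allow long $\Dn$ jumps once chaps accumulate in a column. Showing that the extended column walls, together with the chaps added in column $n$ to guide the compound jump, suppress all such new options is the crux of the generalization. A secondary concern is the parity of $m - n$: when it is odd, the exact 180-symmetry requires a small local adjustment (for instance, pairing a tactical chap with an extra guide chap near the symmetric image), which I expect to be a cosmetic modification rather than a substantive barrier.
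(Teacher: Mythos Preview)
Your high-level plan --- transplant the $12\times 10$ configuration while preserving the diagonal forcing sequence and the $180^\circ$ symmetry --- is the right idea and matches the paper's strategy. But your concrete description of the generalized configuration has a real gap for $n>10$, and your treatment of $m>n+2$ diverges from what the paper actually does.

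For the width: you propose to keep ``column walls at columns $3$ and $n-2$'' and place tactical chaps only near the ball and near its $180^\circ$-image. That is not enough. In the $12\times 10$ analysis, every single tackle along the diagonal becomes a shot \emph{because of a nearby tactical chap}: after $e6$ Betty's shot runs through $g7$, $h6$ and the $h$-column wall; after $f7$ Alfred's shot runs through $f9$ and the $e$-column tail; after $g8$ Betty's shot uses $h8$, $h6$, etc. These chaps sit in the \emph{middle} of the board, not near the ends. If on a wider board you only put structure near columns $3$ and $n-2$, then once the forced tackles reach the middle of the diagonal there is nothing to make the next tackle a shot, and the forcing sequence collapses. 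The paper's $19\times 15$ configuration (Figure~\ref{fig:official-draw}) makes this explicit: the wall/tactical pattern is \emph{repeated periodically} across the new columns (walls in columns $C,E,G,I,L$ and their duals, with tactical chaps like $g6$, $i8$, $h11$, $j13$, $k11$ interleaved), so that every step of the longer diagonal is backed by the same local threats as in the $12\times 10$ case. Your proposal does not account for this repetition.

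For the height: rather than your compound $\Dne$-then-$\Dn$ jump landing at $(n,m-1)$, the paper fills the entire rightmost column with chaps and extends the tops of the periodic wall columns so that the diagonal jump lands one column short of the sideline; the extra rows above are padded with chaps (e.g.\ the $n17$--$n19$ block) so that the landing square behaves exactly like $j11$ did on the $12\times 10$ board. Your compound-jump idea may be salvageable, but you would have to argue separately that Alfred is \emph{forced} to take the $\Dn$ continuation rather than stop after $\Dne$, and that the guide chaps in column $n$ do not hand Betty a new downward shot --- issues the paper's padding approach sidesteps.
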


\begin{proof}
  Let us just show how to generalize the configuration from
  Figure~\ref{fig:main-draw} to the configuration of
  Figure~\ref{fig:official-draw} on the official $19\times 15$ board,
  and leave the rest as an exercise to the reader.
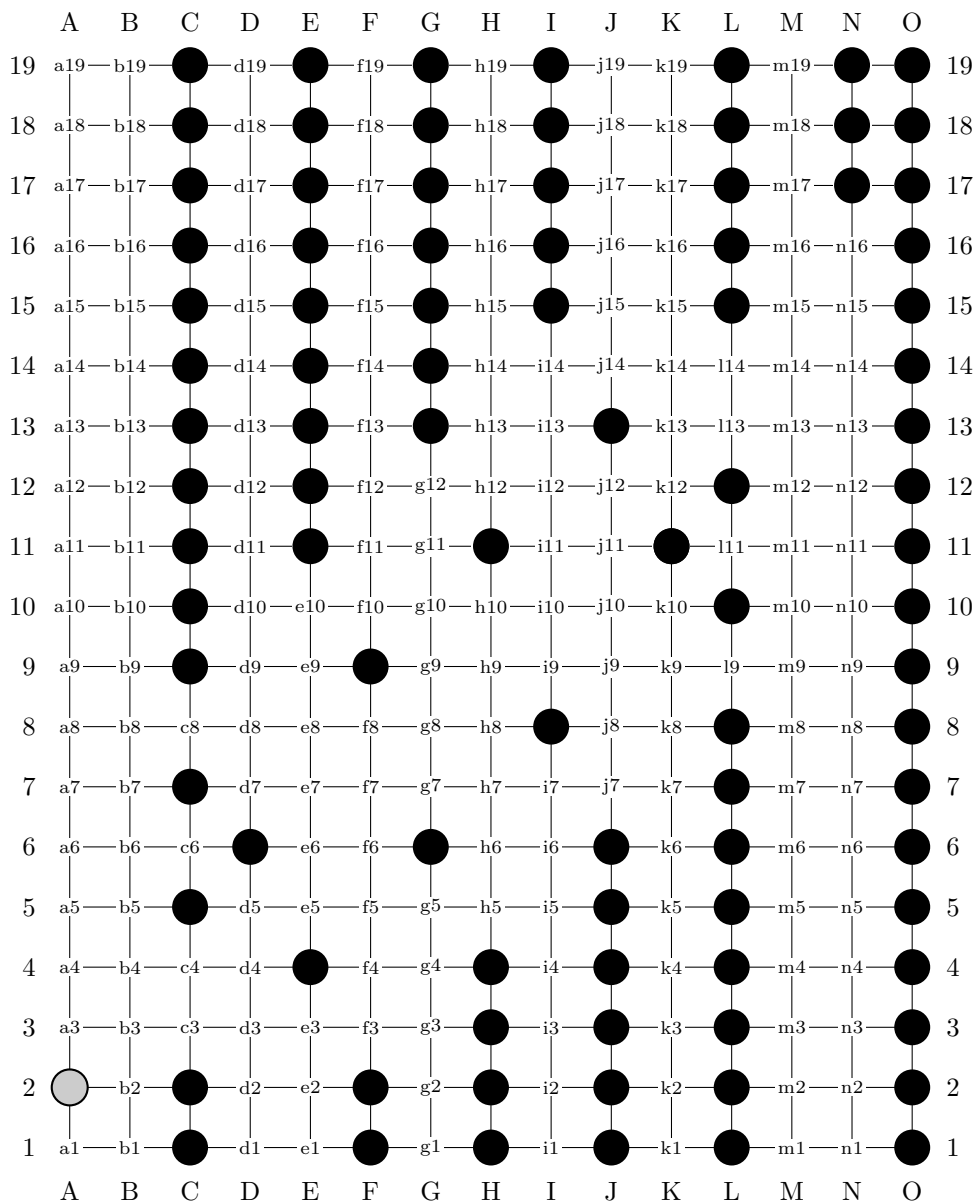
\begin{figure}[ht]
\centering
\begin{tikzpicture}[scale=0.8]

\draw (0,1) grid (14,19);

\foreach \l [count=\i from 0] in {A,B,C,D,E,F,G,H,I,J,K,L,M,N,O}
{
  \node[below=3ex,inner sep=0pt,outer sep=0pt] at (\i,1) {\l};
  \node[above=3ex,inner sep=0pt,outer sep=0pt] at (\i,19) {\l};
}
\foreach \j in {1,...,19}
{
  \node[left=3ex,inner sep=0pt,outer sep=0pt] at (0,\j) {\j};
  \node[right=3ex,inner sep=0pt,outer sep=0pt] at (14,\j) {\j};
}
\foreach \l [count=\i from 0] in {a,b,c,d,e,f,g,h,i,j,k,l,m,n,o}
{
  \foreach \j in {1,...,19}
  {
    \node[fill=white,fill opacity=1, text opacity=1,inner sep=1pt,
    outer sep=0pt] (\l\j) at (\i,\j) {\scriptsize \l\j};
  }
}

\node[ball] at (a2) {};

\foreach \i in {1,...,19}
{
  \node[chap] at (o\i) {};
}

\node[chap] at (c1) {};
\node[chap] at (c2) {};
\node[chap] at (c5) {};
\node[chap] at (c7) {};

\foreach \i in {9,...,19}
{
  \node[chap] at (c\i) {};
}
\node[chap] at (d6) {};

\node[chap] at (e4) {};
\foreach \i in {11,...,19}
{
  \node[chap] at (e\i) {};
}
\foreach \i in {1,2}
{
  \node[chap] at (f\i) {};
}
\node[chap] at (f9) {};
\node[chap] at (g6) {};
\foreach \i in {13,...,19}
{
  \node[chap] at (g\i) {};
}
\foreach \i in {1,...,4}
{
  \node[chap] at (h\i) {};
}
\node[chap] at (h11) {};
\node[chap] at (i8) {};
\foreach \i in {15,...,19}
{
  \node[chap] at (i\i) {};
}
\foreach \i in {1,...,6}
{
  \node[chap] at (j\i) {};
}
\node[chap] at (j13) {};

\node[chap] at (k11) {};
\foreach \i in {1,...,8}
{
  \node[chap] at (l\i) {};
}
\node[chap] at (l10) {};
\node[chap] at (l12) {};
\node[chap] at (k11) {};
\foreach \i in {15,...,19}
{
  \node[chap] at (l\i) {};
}

\foreach \i in {17,...,19}
{
  \node[chap] at (n\i) {};
}
\end{tikzpicture}
\caption{A drawn position in Phutball on the standard pitch.}\label{fig:official-draw}
\end{figure}
The reader should note that despite filling up the rightmost column
with chaps, the sideline subtleties from Figure~\ref{fig:illustrative}
are not invoked. The reader should additionally note that some of the
analysis from the proof of Theorem~\ref{thm:main} become more
involved. For instance, after $\alpha$(i)b3! $\beta$(i)c4!, if Alfred
does not tackle at d5, he still loses, but the following sequence is
longer.
  \[
  \text{$\alpha$(ii)$\Dne\Dn$ $\beta$(ii)d6! $\alpha$(iii)d5! $\beta$(iii)d4!! $\alpha$(iv)$\Ds\Dne$ $\beta$(iv)f4!! $\alpha$(v)$\Dne$ $\beta$(v)h6!! $\alpha$(vi)$\Dne$ $\beta$(iv)j8\#}\qedhere
  \]
\end{proof}

The careful reader will notice that the above configurations do not
generalize to the usual $19\times 19$ board since the optimal sequence
of chap placements in the draw is done along a diagonal,
cf.~Figure~\ref{fig:draw-sequence}, and the diagonal needs go from one
sideline to the other since Alfred is only forced to jump because of
the sidelines, and we need to have a row below and a row above to
ensure that the game is not already over. So we leave that as a
question to the reader.
\begin{question}
  Are there drawn configurations in Phutball played on a $19\times 19$
  board?
\end{question}

\noindent\textbf{Acknowledgment.} The author is grateful to John Conway for
introducing him to Phutball, for teaching him the subtle failure of
the strategy-stealing argument in two-dimensional Phutball, and for
the many great games of Phutball that they played for hours in John's
office (the Fine Hall common room). He would also like to thank the
referees for their comments and suggestions. And thank you Simon!

\bibliographystyle{amsalpha}
\bibliography{phutball}

\end{document}